\date{}
\renewcommand{\uppercasenonmath}[1]{}
\numberwithin{equation}{section} \theoremstyle{plain}
\newtheorem{lem}{Lemma}[section]
\newtheorem{prop}[lem]{Proposition}
\newtheorem{thm}[lem]{Theorem}
\newtheorem{definition}[lem]{Definition}
\newtheorem{Ex}[lem]{Example}
\newtheorem{Quest}[lem]{Question}
\newtheorem{Property}[lem]{Property}
\newtheorem{Properties}[lem]{Properties}
\newtheorem{Subprops}{}[lem]
\newtheorem{Para}[lem]{}
\newtheorem{remark}[lem]{Remark}
\newtheorem{rem}[lem]{Remark}
\newenvironment{para}{\begin{Para}\rm}{\end{Para}}
\newtheorem*{ack*}{ACKNOWLEDGEMENTS}
\newcommand{\pf}{\noindent\begin {proof}}
\newcommand{\epf}{\end{proof}}
\begin{document}
\begin{center}
{\Large  \bf  Balanced pairs on triangulated categories}

\vspace{0.5cm} Xianhui Fu$^a$, Jiangsheng Hu$^b$, Dongdong Zhang$^c$ and Haiyan Zhu$^d$\footnote{Corresponding author. Xianhui Fu was supported by the NSF of China (Grant No. 12071064). Jiangsheng Hu was supported by the NSF of China (Grant Nos. 12171206, 11771212) and  Qing Lan Project of Jiangsu Province. Haiyan Zhu was supported by the Natural Science Foundation of Zhejiang Provincial (LY18A010032).} \\
\medskip

\hspace{-4mm}
$^a$School of Mathematics and Statistics, Northeast Normal University, Changchun 130024, China\\
  $^b$Department of Mathematics, Jiangsu University of Technology,
 Changzhou 213001, China\\

 $^c$Department of Mathematics, Zhejiang Normal University,
 Jinhua 321004, China\\
 $^d$College of Science, Zhejiang University of Technology, Hangzhou 310023, China\\

E-mails: fuxianhui@gmail.com, jiangshenghu@jsut.edu.cn, zdd@zjnu.cn and hyzhu@zjut.edu.cn\\
\end{center}

\bigskip
\medskip
\centerline { \bf  Abstract}
\leftskip10truemm \rightskip10truemm \noindent
Let $\mathcal{C}$ be a triangulated category. We first introduce the notion of balanced pairs in $\mathcal{C}$, and then establish the bijective correspondence between balanced pairs and proper classes $\xi$ with enough $\xi$-projectives and enough $\xi$-injectives.
Assume that $\xi:=\xi_{\mathcal{X}}=\xi^{\mathcal{Y}}$ is the proper class  induced by a balanced pair $(\mathcal{X},\mathcal{Y})$. We prove that $(\mathcal{C}, \mathbb{E}_\xi, \mathfrak{s}_\xi)$ is an extriangulated category. Moreover, it is proved that $(\mathcal{C}, \mathbb{E}_\xi, \mathfrak{s}_\xi)$ is a triangulated category if and only if $\mathcal{X}=\mathcal{Y}=0$; and that $(\mathcal{C}, \mathbb{E}_\xi, \mathfrak{s}_\xi)$ is an exact category if and only if $\mathcal{X}=\mathcal{Y}=\mathcal{C}$. As an application, we produce a large variety of examples of extriangulated categories which are neither exact nor triangulated.
\\[2mm]
{\bf Keywords:} triangulated category; proper class; balanced pair;
  extriangulated category.\\
{\bf 2020 Mathematics Subject Classification:} 18G80; 18E10; 18G25;  18G10.

\leftskip0truemm \rightskip0truemm
%\bigskip
\section {Introduction}
%\bigskip

Relative homological algebra has been formulated by Hochschild in
categories of modules and later by Heller, Butler and Horrocks in more general categories with a relative abelian structure. Beligiannis developed in \cite{Bel1} a relative version of homological algebra in triangulated categories in analogy to relative homological algebra in abelian categories, in which the notion of a proper class of exact sequences is replaced by a proper class of triangles. By specifying a class of triangles $\xi$, which is called a proper class of triangles, he introduced the full subcategory $\mathcal{P}(\xi)$ of $\xi$-projective objects and the full subcategory $\mathcal{I}(\xi)$ of $\xi$-injective objects

The notion of balanced pairs involving two additive full subcategories
of an abelian category $\mathcal{A}$ was introduced by Chen \cite{chen}, which generalizes projectives and injectives from homological aspects. In
general, a balanced pair always shares many similar properties with the projectives and injectives. We refer to Chen \cite{chen} for more details. It should be noted that this subject first appeared in Enochs' work (see \cite{EJ} for instance), where a pair $(\mathcal{H}, \mathcal{G})$ in $\mathcal{A}$ is balanced if
and only if the bifunctor $\textrm{Hom}_{\mathcal{A}}(-,-)$ is right balanced by $\mathcal{H}\times\mathcal{G}$. For examples of balanced pairs, the reader may refer to \cite[Chapter 8]{EJ}. An interesting and deep
result in \cite{WLH} is that in an abelian category $\mathcal{A}$ with small Ext groups, there
exists a one-to-one correspondence between balanced pairs and Quillen exact structures $\xi$ with enough $\xi$-projectives and enough $\xi$-injectives. Motivated by this, it seems natural to introduce the notion of balanced pairs in a triangulated category $\mathcal{C}$, and to establish certain relations connecting balanced pairs with certain classes of triangles in $\mathcal{C}$. Thus, we have the following main result of this paper.

\begin{thm}\label{main-theorem}\label{thm:3.11} Let $\mathcal{C}$ be a triangulated category.
The assignments
\begin{center}
 $\Psi:(\mathcal{X},\mathcal{Y})\mapsto$~$\xi_{\mathcal{X}}=\xi^{\mathcal{Y}}$~$~~$ and $~~$~$\Phi:\xi\mapsto(\mathcal{P}(\xi),\mathcal{I}(\xi))$
\end{center}
  give mutually inverse bijections between the following classes:

\begin{enumerate}
\item Balanced pairs $(\mathcal{X},\mathcal{Y})$ in $\mathcal{C}$.

\item Proper classes $\xi$ in $\mathcal{C}$ with enough $\xi$-projectives and enough $\xi$-injectives.
\end{enumerate}
\end{thm}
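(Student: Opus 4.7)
The plan is to verify the four standard conditions for a pair of mutually inverse maps: that $\Psi$ and $\Phi$ are each well-defined, and that the two compositions are the identity. The strategy adapts the abelian-categorical argument of \cite{WLH} by replacing short exact sequences with triangles, and systematically invokes Beligiannis' formalism from \cite{Bel1} for proper classes in triangulated categories.

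First, I would show $\Psi$ is well-defined. Starting from a balanced pair $(\mathcal{X},\mathcal{Y})$, the classes $\xi_{\mathcal{X}}$ (triangles on which $\mathrm{Hom}_{\mathcal{C}}(\mathcal{X},-)$ is exact) and $\xi^{\mathcal{Y}}$ (triangles on which $\mathrm{Hom}_{\mathcal{C}}(-,\mathcal{Y})$ is exact) are each immediately closed under isomorphism, rotation and direct summands. To see that they coincide, I take a triangle in $\xi_{\mathcal{X}}$, construct a $\mathcal{Y}$-coresolution of each of its terms, assemble the resulting double diagram, and exploit the fact that $(\mathcal{X},\mathcal{Y})$ right-balances the $\mathrm{Hom}$-bifunctor to transfer exactness from the $\mathcal{X}$-side to the $\mathcal{Y}$-side; the reverse inclusion is dual. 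Closure of the common class under compositions of inflations and deflations — what promotes it to a proper class — is a routine octahedral chase in the spirit of \cite[\S 2]{Bel1}. Enough $\xi_{\mathcal{X}}$-projectives and $\xi^{\mathcal{Y}}$-injectives are supplied directly by the two legs of the balanced pair.

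Next, for $\Phi$ well-defined, given a proper class $\xi$ with enough projectives and injectives I iterate the enough-projectives condition to build, for every $C\in\mathcal{C}$, a $\mathcal{P}(\xi)$-resolution made of $\xi$-triangles; dually an $\mathcal{I}(\xi)$-coresolution. The balance condition then reduces to showing that $\mathrm{Hom}_{\mathcal{C}}(P_\bullet,N)$ and $\mathrm{Hom}_{\mathcal{C}}(M,I^\bullet)$ compute the same cohomology, which is the standard double-complex comparison because $\mathcal{P}(\xi)$ is exact on $\xi$ and $\mathcal{I}(\xi)$ is coexact on $\xi$. For $\Phi\Psi=\mathrm{id}$, the inclusion $\mathcal{X}\subseteq\mathcal{P}(\xi_{\mathcal{X}})$ is built into the definition; the reverse uses that a $\xi_{\mathcal{X}}$-triangle $K\to X\to P\to\Sigma K$ with $X\in\mathcal{X}$ must split when $P$ is $\xi_{\mathcal{X}}$-projective, exhibiting $P$ as a summand of $X$, after which closure of $\mathcal{X}$ under summands (a standing property of the left side of a balanced pair) finishes the job; symmetrically for $\mathcal{Y}$.

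For $\Psi\Phi=\mathrm{id}$ the inclusion $\xi\subseteq\xi_{\mathcal{P}(\xi)}=\xi^{\mathcal{I}(\xi)}$ is immediate from the definitions of $\mathcal{P}(\xi)$ and $\mathcal{I}(\xi)$. The reverse inclusion is the most delicate point: given a triangle in $\xi_{\mathcal{P}(\xi)}$, I would lift a $\xi$-deflation $X\to C$ with $X\in\mathcal{P}(\xi)$ through the middle term using projectivity, apply the octahedral axiom twice to exhibit the original triangle as a retract of a $\xi$-triangle, and conclude using that proper classes are closed under direct summands. The main obstacle is precisely this horseshoe / comparison step, which also underlies the closure-under-composition assertion in paragraph one; it demands careful bookkeeping of iterated octahedra, together with a parallel injective-side argument to ensure that $\xi_{\mathcal{X}}$ and $\xi^{\mathcal{Y}}$ agree on the nose rather than merely up to proper-class closure.
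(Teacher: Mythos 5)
Your overall architecture coincides with the paper's: since the paper defines a balanced pair (Definition \ref{df:3.7}) as a pair satisfying the equivalent conditions of Proposition \ref{thm1}, Theorem \ref{thm:3.11} there reduces to Propositions \ref{prop1}, \ref{prop2} and \ref{thm1}, and your verifications --- approximations supplying enough $\xi$-projectives and $\xi$-injectives, transfer of exactness from the $\mathcal{X}$-side to the $\mathcal{Y}$-side via an exact resolution of the third term, and the splitting argument showing $\mathcal{P}(\xi_{\mathcal{X}})\subseteq\mathcal{X}$ --- essentially re-derive those supporting results. (One small calibration: properness in the triangulated setting is not about closure under composition of inflations and deflations, as in the exact-category case of \cite{WLH}, but about closure under suspension, base/cobase change, and saturation; the paper checks these directly for $\xi_{\mathcal{X}}$ in Proposition \ref{prop1}.) The genuine gap is in your mechanism for the step you rightly single out as the most delicate, the inclusion $\xi_{\mathcal{P}(\xi)}\subseteq\xi$ in $\Psi\Phi=\mathrm{id}$. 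You propose to ``exhibit the original triangle as a retract of a $\xi$-triangle'' and conclude because ``proper classes are closed under direct summands.'' Summand-closure is not among the axioms of a proper class (it is derivable, via base and cobase change along split monomorphisms and epimorphisms), but the real problem is that you never construct the retraction, and there is no candidate $\xi$-triangle of which $T$ is visibly a retract until one already knows $T\in\xi$. What is missing from your sketch is the \emph{saturation} axiom --- the condition Beligiannis builds into the definition of a proper class, and which the paper records in its Definition 2.2, precisely so that this step works; it appears nowhere in your proposal.

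Here is the repair. Let $T\colon A\xrightarrow{f}B\xrightarrow{g}C\xrightarrow{h}\Sigma A$ be $\mathcal{C}(\mathcal{P}(\xi),-)$-exact and choose a triangle $K\xrightarrow{\iota}P\xrightarrow{p}C\xrightarrow{\zeta}\Sigma K$ in $\xi$ with $P\in\mathcal{P}(\xi)$; exactness of $\mathcal{C}(P,-)$ on $T$ yields $q\colon P\to B$ with $gq=p$. Base-changing $T$ along $p$ produces a triangle $A\to G\to P\xrightarrow{hp}\Sigma A$, and $hp=(hg)q=0$, so this triangle is split, hence lies in $\Delta_0\subseteq\xi$; now saturation, applied to the base-change diagram whose third vertical triangle is the projective one and whose second horizontal triangle is the split one, gives $T\in\xi$. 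Alternatively, complete the commuting square $gq=p$ by (TR3) to a morphism $(a,q,1_C)$ of triangles, so that $h=\Sigma a\circ\zeta$; the cobase change of the projective triangle along $a\colon K\to A$ is then a $\xi$-triangle $A\to Q\to C\to\Sigma A$ with the same connecting morphism $h$, hence isomorphic to $T$ by the triangulated five lemma, and $\xi$ is closed under cobase change and isomorphisms. With either of these two-line arguments in place your plan goes through; note that the paper itself compresses exactly this point into the phrase ``it is straightforward to see that $\Psi\Phi(\xi)=\xi$,'' so this is the one piece of content a complete write-up must actually supply.
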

We note that the precise definition of balanced pairs in a triangulated category $\mathcal{C}$ can be seen in Definition \ref{df:3.7} below. Some examples of balanced pairs are presented Examples \ref{Ex:3.8}-\ref{Ex:3.10}.

Assume that $\mathbb{E}: \mathcal{C}^{\rm op}\times \mathcal{C}\rightarrow {\rm Ab}$ is an additive bifunctor, where $\mathcal{C}$ is an additive category and ${\rm Ab}$ is the category of abelian groups. For any objects $A, C\in\mathcal{C}$, an element $\delta\in \mathbb{E}(C,A)$ is called an $\mathbb{E}$-extension.
Let $\mathfrak{s}$ be a correspondence which associates an equivalence class $$\mathfrak{s}(\delta)=\xymatrix@C=0.8cm{[A\ar[r]^x
 &B\ar[r]^y&C]}$$ to an $\mathbb{E}$-extension $\delta\in\mathbb{E}(C, A)$. This $\mathfrak{s}$ is called a {\it realization} of $\mathbb{E}$, if it makes the diagram in \cite[Definition 2.9]{NP} commutative.
 A triplet $(\mathcal{C}, \mathbb{E}, \mathfrak{s})$ is called an {\it extriangulated category} if it satisfies the following conditions:
\begin{enumerate}
\item $\mathbb{E}\colon\mathcal{C}^{\rm op}\times \mathcal{C}\rightarrow \rm{Ab}$ is an additive bifunctor;

\item $\mathfrak{s}$ is an additive realization of $\mathbb{E}$; and

\item $\mathbb{E}$ and $\mathfrak{s}$ satisfy certain axioms in \cite[Definition 2.12]{NP}.
\end{enumerate}

Exact categories and extension closed subcategories of an extriangulated category are extriangulated categories. In particular, every triangulated category is an extriangulated category. More precisely,
assume that $\mathcal{C}$ is a triangulated category with  suspension functor $\Sigma$. For any $\delta\in{\mathbb{E}(C,A)}=\mathcal{C}(C,\Sigma A)$, take a distinguished triangle

$$\xymatrix@C=2em{ A\ar[r]^f&B\ar[r]^g&C\ar[r]^{h\ \ \ \ }&\Sigma A}$$
and define as $\mathfrak{s}(\delta)=\xymatrix{[A\ar[r]^f&B\ar[r]^g&C].}$ With this definition, $(\mathcal{C}, \mathbb{E}, \mathfrak{s})$ becomes an extriangulated category (see \cite[Proposition 3.22(1)]{NP}).

\begin{thm}\label{cor:3.13} Let $(\mathcal{X},\mathcal{Y})$ be a {balanced pair} in a triangulated category $\mathcal{C}$. Then $\xi:=\xi_{\mathcal{X}}=\xi^{\mathcal{Y}}$ is a proper class in $\mathcal{C}$. With the notation above, we set $\mathbb{E}_\xi:=\mathbb{E}|_\xi$, that is, $$\mathbb{E}_\xi(C, A)=\{\delta\in\mathbb{E}(C, A)~|~\delta~ \textrm{is realized as an $\mathbb{E}$-triangle}\xymatrix{A\ar[r]^x&B\ar[r]^y&C\ar@{-->}[r]^{\delta}&}~\textrm{in}~\xi\}$$ for any $A, C\in\mathcal{C}$, and $\mathfrak{s}_\xi:=\mathfrak{s}|_{\mathbb{E}_\xi}$. Hence $(\mathcal{C}, \mathbb{E}_\xi, \mathfrak{s}_\xi)$ is an extriangulated category. Moreover, we have the following
\begin{enumerate}
\item[(1)] $(\mathcal{C}, \mathbb{E}_\xi, \mathfrak{s}_\xi)$ is a triangulated category if and only if $\mathcal{X}=\mathcal{Y}=0$; and

\item[(2)] $(\mathcal{C}, \mathbb{E}_\xi, \mathfrak{s}_\xi)$ is an exact category if and only if $\mathcal{X}=\mathcal{Y}=\mathcal{C}$.
\end{enumerate}
\end{thm}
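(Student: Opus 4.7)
The plan is to first establish the extriangulated structure and then read off the two collapse criteria.

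\textbf{Extriangulated structure.} By Theorem~\ref{main-theorem}, $\xi = \xi_\mathcal{X} = \xi^\mathcal{Y}$ is a proper class of triangles. I would verify the Nakaoka--Palu axioms for $(\mathcal{C}, \mathbb{E}_\xi, \mathfrak{s}_\xi)$ by a routine dictionary: bifunctoriality and additivity of $\mathbb{E}_\xi$ correspond to closure of $\xi$ under base change, cobase change, and direct sums; that $\mathfrak{s}_\xi$ is an additive realization follows from closure of $\xi$ under triangle isomorphisms and direct sums; and axioms (ET3), (ET3$^{\rm op}$), (ET4), (ET4$^{\rm op}$) are axiomatic rewordings of the base-change, cobase-change and octahedral-like requirements built into the definition of a proper class. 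This parallels existing arguments in the literature on extriangulated structures arising from proper classes of triangles.

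\textbf{Part (1).} If $\mathcal{X} = 0$, the defining condition of $\xi_\mathcal{X}$---that $\mathcal{C}(X,-)$ send the triangle to a short exact sequence for every $X \in \mathcal{X}$---is vacuous, so every distinguished triangle lies in $\xi$. Hence $\mathbb{E}_\xi = \mathbb{E}$ and $\mathfrak{s}_\xi = \mathfrak{s}$, recovering the canonical triangulated extriangulated structure on $\mathcal{C}$. Conversely, if $(\mathcal{C}, \mathbb{E}_\xi, \mathfrak{s}_\xi)$ is triangulated with some suspension $\Sigma'$, then any $\mathbb{E}_\xi$-projective $P$ satisfies $\mathcal{C}(P, \Sigma' -) \equiv 0$; setting the second argument to $(\Sigma')^{-1}P$ yields $\mathrm{id}_P = 0$, whence $P = 0$. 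Since $\mathcal{X} = \mathcal{P}(\xi)$ is precisely the class of $\mathbb{E}_\xi$-projective objects, $\mathcal{X} = 0$; dually $\mathcal{Y} = 0$.

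\textbf{Part (2).} I would invoke the standard Nakaoka--Palu characterization that an extriangulated category is exact iff every inflation is a monomorphism. In the ambient triangulated category $\mathcal{C}$, a morphism $f : A \to B$ completing to a triangle $A \xrightarrow{f} B \to C \xrightarrow{h} \Sigma A$ is monic iff $h = 0$, iff the triangle splits. Hence $(\mathcal{C}, \mathbb{E}_\xi, \mathfrak{s}_\xi)$ is exact iff $\xi$ consists only of split triangles; then $\mathbb{E}_\xi = 0$ and any additive functor preserves split sequences, so $\mathcal{P}(\xi) = \mathcal{I}(\xi) = \mathcal{C}$, giving $\mathcal{X} = \mathcal{Y} = \mathcal{C}$. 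Conversely, if $\mathcal{X} = \mathcal{Y} = \mathcal{C}$, applying the defining condition of $\xi_\mathcal{X}$ to a $\xi$-triangle $A \to B \to C \xrightarrow{h} \Sigma A$ with $X := C$ forces $h = h \circ \mathrm{id}_C = 0$, so the triangle splits, $\xi$ is the class of split triangles, and the resulting structure is trivially exact.

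\textbf{Main obstacle.} The bulk of the work lies in the first step: a careful axiom-by-axiom matching of the extriangulated axioms against the closure conditions of a proper class. Once that dictionary is in place, parts (1) and (2) reduce to short deductions from Theorem~\ref{main-theorem} together with the basic facts about monomorphisms and projectives in triangulated categories.
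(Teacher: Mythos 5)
Your proposal is correct and follows essentially the paper's own route: both obtain the proper class from Theorem \ref{thm:3.11}, both delegate the verification of the extriangulated axioms to known work (the paper simply cites \cite[Theorem 3.2]{HZZ}, which is what your ``routine dictionary'' amounts to --- you should cite that result rather than call (ET4) a rewording of closure properties, since its verification for $\mathbb{E}_\xi$ is a genuine several-step argument using saturation and (co)base change), and both settle (1) and (2) by identifying $\xi$ with the class of all triangles, respectively the split triangles. Your converse in (1) --- a projective $P$ satisfies $\mathbb{E}_\xi(P,-)\cong\mathcal{C}(P,\Sigma'-)=0$, so $\mathrm{id}_P=0$ and $P=0$ --- is an equivalent variant of the paper's argument that every morphism into $X\in\mathcal{X}$ is a deflation whose $\xi$-triangle splits, and your part (2) coincides with the paper's use of \cite[Corollary 3.18]{NP} together with the fact that a monic first map forces a triangle to split.
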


To find more examples of extriangulated categories which are neither exact nor triangulated is an interested topic (see \cite{NP}, \cite{ZZ} and \cite{HZZ}). Theorem \ref{cor:3.13} provides a systematical way to produce ample examples of extriangulated categories which are neither exact nor triangulated (see Remark \ref{rem:3.14}, especially Example \ref{Ex:3.10}) which include \cite[Remark 3.3(1)]{HZZ} as a particular case (see Example \ref{Ex:3.8}).

The contents of this paper are outlined as follows. In Section \ref{preliminaries}, we fix notations and recall some definitions and basic facts used throughout the paper. In Section \ref{proof}, we first introduce and study the balanced pairs in a triangulated category $\mathcal{C}$, and then we give the proof of Theorem \ref{thm:3.11} and Theorem \ref{cor:3.13}.

\section{Preliminaries}\label{preliminaries}
Throughout the paper, we fix a triangulated category $\mathcal{C}=(\mathcal{C},\Sigma, \Delta)$, $\Sigma$ is the suspension
functor and $\Delta$ is the triangulation.

\begin{remark} {\rm Some equivalent formulations for the Octahedral axiom (Tr4), named base change
and cobase change, are given in \cite[2.1]{Bel1}, which are more convenient to use.}
\end{remark}

A triangle $\xymatrix@C=2em{(T): A\ar[r]^f&B\ar[r]^g&C\ar[r]^{h\ \ \ \ }&\Sigma A\in{\Delta}}$ is {\it split} if $h=0$.
It is easy to see that if $(T)$ is split, then the morphisms $f$, $g$ induce a direct sum decomposition $B\cong A\oplus C$.
 The full subcategory of $\Delta$ consisting of the split triangles will be denoted by $\Delta_0$.

  The following definitions are quoted from \cite[Section 2]{Bel1}. A class of triangles $\xi$ is {\it closed under base change} if for any triangle $\xymatrix@C=1.5em{A\ar[r]^f&B\ar[r]^g&C\ar[r]^{h\ \ \ \ }&\Sigma A}$ in $\xi$ and any morphism $\xymatrix@C=1.5em{\varepsilon: E\ar[r]& C,}$
  one gets from the commutative diagram of triangles $$\xymatrix{
  0 \ar[d] \ar[r] & M \ar[d]_{\alpha} \ar[r]^{=} & M\ar[d]_{\delta} \ar[r] & 0\ar[d] \\
  A \ar[d]_{||} \ar[r]^{f'} & G \ar[d]_{\beta} \ar[r]^{g'} & E\ar[d]_{\varepsilon} \ar[r]^{h'} & \Sigma A\ar[d]_{||} \\
  A \ar[d] \ar[r]^{f} & B\ar[d]_{\gamma} \ar[r]^{g} & C \ar[d]_{\zeta} \ar[r]^{h} & \Sigma A \ar[d] \\
  0 \ar[r]& \Sigma M\ar[r]^{=} & \Sigma M \ar[r] & 0   }
  $$
that the triangle $\xymatrix@C=1.5em{A\ar[r]^{f'}&G\ar[r]^{g'}&E\ar[r]^{h'}&\Sigma A}$ belongs to $\xi$. Dually, one has the notion that a class of triangles $\xi$ is {\it closed under cobase change}. A class of triangles $\xi$ is {\it closed under suspension} if for any triangle $\xymatrix@C=1.5em{A\ar[r]^f&B\ar[r]^g&C\ar[r]^{h\ \ \ \ }&\Sigma A}$ in $\xi$, the triangle $$\xymatrix@C=3em{\Sigma^iA\ar[r]^{(-1)^i\Sigma^if}&\Sigma^iB\ar[r]^{(-1)^i\Sigma^ig}&\Sigma^iC\ar[r]^{(-1)^i\Sigma^ih}&\Sigma^{i+1} A}$$ belongs to $\xi$  for all $i\in \mathbb{Z}$. A class of triangles $\xi$ is called {\it saturated} if in the situation of base change, whenever the third vertical and the second horizontal triangles are in $\xi$, then the  triangle $\xymatrix{A\ar[r]^f&B\ar[r]^g&C\ar[r]^{h }&\Sigma A}$  is in $\xi$.

  \begin{definition} {\rm (see \cite[Definition 2.2]{Bel1}) A full subcategory $\xi\subseteq \Delta$ is called a {\it proper class} of triangles if the following conditions hold:

  (1) $\xi$ is closed under isomorphisms, finite coproducts and $\Delta_0\subseteq \xi\subseteq \Delta$.

  (2) $\xi$ is closed under suspensions and is saturated.

  (3) $\xi$ is closed under base and cobase change.}

  \end{definition}
There are more interesting examples of proper classes of triangles enumerated in \cite[Example 2.3]{Bel1}. Throughout the paper we fix a proper class of triangles $\xi$ in the triangulated category of $\mathcal{C}$.

  \begin{definition} {\rm (see \cite[Definition 4.1]{Bel1}) An object $P\in\mathcal{C}$  is called  {\it $\xi$-projective}  if for any triangle $\xymatrix{A\ar[r]& B\ar[r]& C \ar[r]& \Sigma A}$ in $\xi$, the induced sequence of abelian groups $$\xymatrix{0\ar[r]& \mathcal{C}(P,A)\ar[r]& \mathcal{C}(P,B)\ar[r]&\mathcal{C}(P,C)\ar[r]& 0}$$ is exact. The triangulated category $\mathcal{C}$ is said to  have {\it  enough $\xi$-projectives}  provided that for each object $A$ there is a triangle $\xymatrix{K\ar[r]& P\ar[r]&A\ar[r]& \Sigma K}$  in $\xi$ with $P\in\mathcal{P}(\xi)$.

  Dually, one can define {\it $\xi$-injective} objects and {\it enough $\xi$-injectives}.}
  \end{definition}

 It is easy to check that the full subcategory $\mathcal{P}(\xi)$ of $\xi$-projective objects and the full subcategory $\mathcal{I}(\xi)$ of $\xi$-injective objects are full, additive, closed under isomorphisms, direct summands and {\it $\Sigma$-stable}, i.e. $\Sigma (\mathcal{P}(\xi))=\mathcal{P}(\xi)$ and $\Sigma (\mathcal{I}(\xi))=\mathcal{I}(\xi)$.

A {\it $\xi$-exact} complex $\mathbf{X}$ is a diagram $\xymatrix{\cdots\ar[r]&X_1\ar[r]^{d_1}&X_0\ar[r]^{d_0}&X_{-1}\ar[r]&\cdots}$ in $\mathcal{C}$ such that  there exists triangle $\xymatrix{K_{n+1}\ar[r]^{g_n}&X_n\ar[r]^{f_n}&K_n\ar[r]^{h_n\ \ \ }&\Sigma K_{n+1}}$ in $\xi$ for each integer $n$ and the differential is defined as $d_n=g_{n-1}f_n$ for each $n$.

If $\mathcal{C}$ has enough $\xi$-projectives, then for any object in $A\in{\mathcal{C}}$, there exists a $\xi$-exact complex $\xymatrix@C=1.5em{\mathbf{P}\ar[r]& A}$ such that $P_{i}\in\mathcal{P}(\xi)$ for all $i\geqslant0$, which is said to be a {\it $\xi$-projective resolution} of $A$ (see \cite[Definition 4.7]{Bel1}).

\begin{definition} {\rm (see \cite[Section 4]{Bel1})} {\rm For any objects $A, B$ of $\mathcal{C}$, choose a $\xi$-projective resolution $\xymatrix{\mathbf{P}\ar[r]& A}$ of  $A$. For any integer $n\geqslant 0$, the \emph{$\xi$-cohomology groups} are defined as
$\xi{\rm xt}_{\mathcal{P}}^n(A,B)=H^n(\mathcal{C}(\mathbf{P},B))$.}
\end{definition}

The {\it $\xi$-projective dimension} $\xi$-${\rm pd} A$ of $A\in\mathcal{C}$ is defined inductively.
 If $A\in\mathcal{P}(\xi)$, then define $\xi$-${\rm pd} A=0$.
Next if $\xi$-${\rm pd} A>0$, define $\xi$-${\rm pd} A\leqslant n$ if there exists a triangle $$\xymatrix{K\ar[r]& P\ar[r]&A\ar[r]& \Sigma K}$$  in $\xi$ with $P\in \mathcal{P}(\xi)$ and $\xi$-${\rm pd} K\leqslant n-1$.
Finally we define $\xi$-${\rm pd} A=n$ if $\xi$-${\rm pd} A\leqslant n$ and $\xi$-${\rm pd} A\nleq n-1$. Of course we set $\xi$-${\rm pd} A=\infty$, if $\xi$-${\rm pd} A\neq n$ for all $n\geqslant 0$.

Dually, we can define the {\it $\xi$-injective dimension}  $\xi$-${\rm id} A$ of $A\in\mathcal{C}$.

\begin{definition}{\rm (see \cite[Definition 3.2]{AS2})} {\rm A triangle $\xymatrix{A\ar[r]& B\ar[r]& C\ar[r]& \Sigma A}$ in $\xi$ is called  {\it $\mathcal{C}(-,\mathcal{P}(\xi))$-exact} if for any $P\in\mathcal{P}(\xi)$, the induced sequence of abelian groups
{$$\xymatrix{0\ar[r]&\mathcal{C}(C,P)\ar[r]& \mathcal{C}(B,P)\ar[r]&\mathcal{C}(A,P)\ar[r]& 0}$$}
is exact. A $\xi$-exact complex  $$\xymatrix{\cdots\ar[r]&X_1\ar[r]^{d_1}&X_0\ar[r]^{d_0}&X_{-1}\ar[r]&\cdots}$$ in $\mathcal{C}$  is called {\it $\mathcal{C}(-,\mathcal{P}(\xi))$-exact} if for any integer $n$, there exists a   $\mathcal{C}(-,\mathcal{P}(\xi))$-exact triangle in $\xi$ $\xymatrix@C=1.5em{K_{n+1}\ar[r]^{g_n}&X_n\ar[r]^{f_n}&K_n\ar[r]^{h_n}&\Sigma K_{n+1} }$  and the differential is defined as $d_n=g_{n-1}f_n$ for each $n$.
}
\end{definition}

\begin{definition} {\rm (see \cite[Definition 3.6]{AS1}) A  {\it complete $\xi$-projective resolution}  is a $\xi$-exact complex $\xymatrix@C=2em{\mathbf{P}:\cdots\ar[r]&P_1\ar[r]^{d_1}&P_0\ar[r]^{d_0}&P_{-1}\ar[r]&\cdots}$ in $\mathcal{C}$ such that $\mathbf{P}$ is
$\mathcal{C}(-,\mathcal{P}(\xi))$-exact and $P_n$ is $\xi$-projective for each integer $n$. Let $\mathbf{P}$ be a $\xi$-exact complex. So for any integer $n$, there exists  a triangle $\xymatrix{K_{n+1}\ar[r]^{g_n}& P_n\ar[r]^{f_n}&K_n\ar[r]^{h_n}& \Sigma K_{n+1}}$  in $\xi$,  the object $K_n$ is called \emph{$\xi$-$\mathcal{G}$projective}.

Dually, one can define {\it complete $\xi$-injective coresolution} and {\it $\xi$-$\mathcal{G}$injective object}.}
\end{definition}

Similar to the way of defining $\xi$-projective and $\xi$-injective dimensions, for an object $A\in{\mathcal{C}}$, the $\xi$-Gprojective dimension $\xi$-${\rm \mathcal{G}pd} A$ and $\xi$-Ginjective dimension $\xi$-${\rm \mathcal{G}id} A$ are defined inductively in \cite{AS1}.

Throughout this paper, the full subcategory of $\xi$-projective (respectively, $\xi$-injective) objects is denoted by $\mathcal{P(\xi)}$ (respectively, $\mathcal{I(\xi)}$). We denote by $\mathcal{GP}(\xi)$ (respectively, $\mathcal{GI}(\xi)$) the class of $\xi$-$\mathcal{G}$projective (respectively, $\xi$-$\mathcal{G}$injective) objects.
It is obvious that $\mathcal{P(\xi)}$ $\subseteq$ $\mathcal{GP}(\xi)$ and $\mathcal{I(\xi)}$ $\subseteq$ $\mathcal{GI}(\xi)$.

\section{Proofs of the results}\label{proof}
Assume that $\mathcal{C}=(\mathcal{C},\Sigma, \Delta)$ is a triangulated category with $\Sigma$ the suspension
functor and $\Delta$ the triangulation.

\begin{definition} {\rm Let $(T):\xymatrix@C=1.5em{A\ar[r]^{f}&B\ar[r]^{g}&C\ar[r]^{h}&\Sigma A}$ be a triangle in $\mathcal{C}$ and $\mathcal{X}$ a full additive subcategory of $\mathcal{C}$.
\begin{enumerate}
\item
The triangle $(T)$ is said to be \emph{left $\mathcal{C}(\mathcal{X},-)$-exact} if the induced map $$\mathcal{C}(X,f):\mathcal{C}(X,A)\rightarrow \mathcal{C}(X,B)$$
is a monomorphism for any object $X\in{\mathcal{X}}$.

\item The triangle $(T)$ is said to be \emph{right $\mathcal{C}(\mathcal{X},-)$-exact} if the induced map $$\mathcal{C}(X,g):\mathcal{C}(X,B)\rightarrow \mathcal{C}(X,C)$$
is an epimorphism for any object $X\in{\mathcal{X}}$.

\item The triangle $(T)$ is said to be \emph{$\mathcal{C}(\mathcal{X},-)$-exact} if it is both left and right $\mathcal{C}(\mathcal{X},-)$-exact.
\end{enumerate}
Dually, we have the notions of left $\mathcal{C}(-,\mathcal{X})$-exact, right $\mathcal{C}(-,\mathcal{X})$-exact and $\mathcal{C}(-,\mathcal{X})$-exact.}
\end{definition}

Recall that a subcategory $\mathcal{X}$ is said to be \emph{$\Sigma$-stable} if $\Sigma(\mathcal{X})=\mathcal{X}$. We have

\begin{lem}\label{lem1} Let $\mathcal{X}$ be a $\Sigma$-stable full additive subcategory of $\mathcal{C}$. Then the following are equivalent for any triangle $(T):\xymatrix@C=1.5em{A\ar[r]^{f}&B\ar[r]^{g}&C\ar[r]^{h}&\Sigma A}.$
\begin{enumerate}
\item The triangle $(T)$ is left $\mathcal{C}(\mathcal{X},-)$-exact.
\item The triangle $(T)$ is right $\mathcal{C}(\mathcal{X},-)$-exact.
\item The triangle $(T)$ is $\mathcal{C}(\mathcal{X},-)$-exact.
\end{enumerate}
\end{lem}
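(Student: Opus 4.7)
The plan is to work with the long exact sequence obtained by applying $\mathcal{C}(X,-)$ to the triangle $(T)$ and exploit the hypothesis $\Sigma(\mathcal{X})=\mathcal{X}$ to swap between the ``mono at $f$'' and ``epi at $g$'' conditions. Specifically, for every object $X\in\mathcal{C}$, applying $\mathcal{C}(X,-)$ to $(T)$ yields the exact sequence
$$\xymatrix@C=1.2em{\mathcal{C}(X,\Sigma^{-1}C)\ar[r]^-{(\Sigma^{-1}h)_*}&\mathcal{C}(X,A)\ar[r]^-{f_*}&\mathcal{C}(X,B)\ar[r]^-{g_*}&\mathcal{C}(X,C)\ar[r]^-{h_*}&\mathcal{C}(X,\Sigma A).}$$
By exactness, $f_*$ is a monomorphism iff $(\Sigma^{-1}h)_*=0$, and $g_*$ is an epimorphism iff $h_*=0$. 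Hence condition (1) is equivalent to: $h\circ\Sigma\phi=0$ for every $\phi\colon X\to\Sigma^{-1}C$ with $X\in\mathcal{X}$; and condition (2) is equivalent to: $h\circ\psi=0$ for every $\psi\colon X\to C$ with $X\in\mathcal{X}$.

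The key step is to pass between these two formulations using $\Sigma$-stability. As $X$ ranges over $\mathcal{X}$, the object $\Sigma X$ ranges over $\Sigma(\mathcal{X})=\mathcal{X}$, and conversely every morphism $\psi\colon X'\to C$ with $X'\in\mathcal{X}$ factors as $\Sigma\phi$ for a (unique) $\phi\colon \Sigma^{-1}X'\to \Sigma^{-1}C$ with $\Sigma^{-1}X'\in\Sigma^{-1}(\mathcal{X})=\mathcal{X}$. This yields a bijective correspondence between the sets of morphisms tested by (1) and those tested by (2), and the vanishing conditions match up. Therefore (1) $\Leftrightarrow$ (2).

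Finally, the implications (3)$\Rightarrow$(1) and (3)$\Rightarrow$(2) are immediate from the definitions, while the converse (1) and (2) together imply (3) also by definition. Chaining this with the equivalence above gives (1)$\Leftrightarrow$(2)$\Leftrightarrow$(3).

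I do not expect any significant obstacle: the argument is essentially diagrammatic, once one has written down the long exact sequence and noted that $\Sigma$-stability turns $\mathcal{X}$ into a set of test objects that is closed under both $\Sigma$ and $\Sigma^{-1}$ (the latter following from $\Sigma\mathcal{X}=\mathcal{X}$ since $\Sigma$ is an autoequivalence). The only care needed is in the bookkeeping around the signs and shifts of the connecting morphisms, which play no role in whether the relevant induced maps are zero.
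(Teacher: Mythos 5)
Your proof is correct, and it is essentially the argument the paper relies on: the paper's ``proof'' is a one-line citation to \cite[Lemma 2.6]{HYF}, whose content is exactly this standard rotation argument (long exact sequence from $\mathcal{C}(X,-)$, vanishing of the connecting maps, and $\Sigma$-stability of $\mathcal{X}$ to identify the test morphisms for (1) with those for (2)). Your version is simply self-contained where the paper outsources, and your handling of the sign on $\Sigma^{-1}h$ and the $\Sigma^{-1}(\mathcal{X})=\mathcal{X}$ bookkeeping is fine.
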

\begin{proof} It follows from \cite[Lemma 2.6]{HYF}.
\end{proof}

 In the following, we set $\xi_{\mathcal{X}}$ be the class of triangles satisfying the condition of Lemma \ref{lem1}.
 Let $\mathcal{Y}$ be a full additive subcategory of $\mathcal{C}$. Recall that a morphism $f:C\rightarrow Y$ with $Y\in{\mathcal{}}$ is called a \emph{left $\mathcal{Y}$-approximation}
(or a \emph{$\mathcal{Y}$-preenvelope}) of $C$ if $\mathcal{C}(f,Y'):\mathcal{C}(Y,Y')\rightarrow \mathcal{C}(C,Y')$ is surjective for any object $Y'\in{\mathcal{Y}}$. If any $C\in \mathcal{C}$ admits a  left $\mathcal{Y}$-approximation, then $\mathcal{Y}$ is \emph{covariantly finite} in $\mathcal{C}$. Dually, one has the notions of \emph{right $\mathcal{Y}$-approximation} (or a \emph{$\mathcal{Y}$-precover}) and \emph{contravariantly finite subcategory} in $\mathcal{C}$.

\begin{prop}\label{prop1} Let $\mathcal{X}$ be a $\Sigma$-stable full additive subcategory of $\mathcal{C}$ which is closed under direct summands.  Then $\xi_{\mathcal{X}}$ is a proper class of $\mathcal{C}$. Moreover, $\mathcal{X}$ is a contravariantly finite in $\mathcal{C}$ if and only if $\mathcal{C}$ has enough $\xi_{\mathcal{X}}$-projectives and $\mathcal{X}=\mathcal{P}(\xi_{\mathcal{X}})$.
\end{prop}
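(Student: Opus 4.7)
The plan is to split the proof into two parts: first verify that $\xi_{\mathcal{X}}$ satisfies the three axioms in the definition of a proper class, then prove the equivalence characterizing contravariant finiteness.

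For the proper class claim, I would check the axioms in turn, using Lemma \ref{lem1} throughout to freely switch between the left, right, and two-sided $\mathcal{C}(\mathcal{X},-)$-exactness. Any split triangle is $\mathcal{C}(\mathcal{X},-)$-exact because $\mathcal{C}(X,-)$ preserves direct sums, so $\Delta_0\subseteq \xi_{\mathcal{X}}\subseteq \Delta$; closure under isomorphisms and finite coproducts is then immediate from additivity of Hom. Closure under suspension uses the $\Sigma$-stability of $\mathcal{X}$: applying $\mathcal{C}(X,-)$ to $\Sigma^i(T)$ is, up to sign, the same as applying $\mathcal{C}(\Sigma^{-i}X,-)$ to $(T)$, and $\Sigma^{-i}X\in\mathcal{X}$. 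For closure under base and cobase change, I would work with the $3\times 3$ diagram from the definition, apply $\mathcal{C}(X,-)$ to obtain a commutative diagram of abelian groups, and use the resulting long exact sequences (equivalently a diagram chase on the two ``good'' rows/columns) to conclude the new triangle is left (resp.\ right) $\mathcal{C}(\mathcal{X},-)$-exact. Saturation reduces similarly: if the second horizontal and third vertical triangles are $\mathcal{C}(\mathcal{X},-)$-exact, then applying $\mathcal{C}(X,-)$ to the $3\times 3$ diagram forces the third horizontal to be so as well.

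For the ``moreover'' part, assume first that $\mathcal{X}$ is contravariantly finite. Given $A\in \mathcal{C}$, choose a right $\mathcal{X}$-approximation $f\colon X\to A$ and complete it to a triangle $K\to X\xrightarrow{f} A\to \Sigma K$. By the defining property of a right approximation, this triangle is right $\mathcal{C}(\mathcal{X},-)$-exact, hence lies in $\xi_{\mathcal{X}}$ by Lemma \ref{lem1}. Since $X\in \mathcal{X}\subseteq \mathcal{P}(\xi_{\mathcal{X}})$ (the inclusion being tautological from the definition of $\xi_{\mathcal{X}}$), this already gives enough $\xi_{\mathcal{X}}$-projectives. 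For the reverse inclusion $\mathcal{P}(\xi_{\mathcal{X}})\subseteq \mathcal{X}$, take $P\in \mathcal{P}(\xi_{\mathcal{X}})$ and use the triangle just constructed for $A=P$: its $\xi_{\mathcal{X}}$-projectivity lifts $\mathrm{id}_P$ through $X\to P$, so $P$ is a direct summand of $X\in \mathcal{X}$, and the hypothesis that $\mathcal{X}$ is closed under direct summands forces $P\in\mathcal{X}$.

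Conversely, if $\mathcal{C}$ has enough $\xi_{\mathcal{X}}$-projectives with $\mathcal{X}=\mathcal{P}(\xi_{\mathcal{X}})$, then for any $A\in\mathcal{C}$ there is a triangle $K\to P\to A\to \Sigma K$ in $\xi_{\mathcal{X}}$ with $P\in\mathcal{X}$. Being in $\xi_{\mathcal{X}}$, the triangle is right $\mathcal{C}(\mathcal{X},-)$-exact by Lemma \ref{lem1}, which is precisely the statement that $P\to A$ is a right $\mathcal{X}$-approximation. The main obstacle I expect is the verification of closure under base and cobase change together with saturation: although Lemma \ref{lem1} collapses the three exactness conditions into one, one still has to run a careful diagram chase on the $3\times 3$ diagram after applying $\mathcal{C}(X,-)$, and keep track of which rows/columns of the resulting diagram of long exact sequences supply the needed vanishing or surjectivity.
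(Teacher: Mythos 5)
Your proposal is correct and follows essentially the same route as the paper: verify the proper-class axioms using Lemma \ref{lem1} to reduce to one-sided exactness, then prove the ``moreover'' part by completing a right $\mathcal{X}$-approximation to a triangle (automatically in $\xi_{\mathcal{X}}$), noting $\mathcal{X}\subseteq\mathcal{P}(\xi_{\mathcal{X}})$ tautologically, and splitting the triangle over a $\xi_{\mathcal{X}}$-projective $P$ to get $P\in\mathcal{X}$ by closure under direct summands. The only real difference is that the diagram chase you anticipate for base/cobase change and saturation is unnecessary: the paper simply reads off the factorizations $\mathcal{C}(\mathcal{X},f)=\mathcal{C}(\mathcal{X},\beta)\,\mathcal{C}(\mathcal{X},f')$ and $\mathcal{C}(\mathcal{X},g)\,\mathcal{C}(\mathcal{X},\beta)=\mathcal{C}(\mathcal{X},\varepsilon)\,\mathcal{C}(\mathcal{X},g')$ from the base-change diagram, so monicity (resp.\ epicity) of the composite immediately forces the required monicity of $\mathcal{C}(\mathcal{X},f')$ (resp.\ epicity of $\mathcal{C}(\mathcal{X},g)$) in one line.
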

\begin{proof}It is easy to see that $\xi_{\mathcal{X}}$ is closed under isomorphisms, finite coproducts and containing all split triangles. Next we claim that
$\xi_{\mathcal{X}}$ is closed under base change and cobase change. Consider the following commutative diagram of triangles
{$$\xymatrix{
  0 \ar[d] \ar[r] & M \ar[d]_{\alpha} \ar[r]^{=} & M\ar[d]_{\lambda} \ar[r] & 0\ar[d] &  \\
  A \ar[d]_{||} \ar[r]^{f'} & G \ar[d]_{\beta} \ar[r]^{g'} & E\ar[d]_{\varepsilon} \ar[r]^{h'} & \Sigma A\ar[d]_{||} &(\ast)  \\
  A \ar[d] \ar[r]^{f} & B\ar[d]_{\gamma} \ar[r]^{g} & C \ar[d]_{\zeta} \ar[r]^{h} & \Sigma A \ar[d]& \\
  0 \ar[r]& \Sigma M\ar[r]^{=} & \Sigma M \ar[r] & 0. &  }
  $$}
If the triangle $\xymatrix@C=1.5em{A\ar[r]^{f}&B\ar[r]^{g}&C\ar[r]^{h}&\Sigma A}$ belongs to $\xi_{\mathcal{X}}$, then
  $\mathcal{C}(\mathcal{X},f)= \mathcal{C}(\mathcal{X},\beta)\mathcal{C}(\mathcal{X},f')$ is monic. Thus $\mathcal{C}(\mathcal{X},f')$ is monic, and hence $\xi_{\mathcal{X}}$ is closed under base change. Similarly, we can prove that $\xi_{\mathcal{X}}$ is closed under cobase change. To prove that $\xi_{\mathcal{X}}$ is closed under saturated,  we assume that two triangles $\xymatrix@C=1.5em{A\ar[r]^{f'}&G\ar[r]^{g'}&E\ar[r]^{h'}&\Sigma A}$ and $\xymatrix@C=1.5em{M\ar[r]^{\lambda}&E\ar[r]^{\varepsilon}&C\ar[r]^{\zeta}&\Sigma M}$ in the diagram ($\ast$) belong to $\xi_{\mathcal{X}}$. Then $\mathcal{C}(\mathcal{X},\varepsilon)$ and $\mathcal{C}(\mathcal{X},g')$ are epic. Thus $\mathcal{C}(\mathcal{X},g)$$\mathcal{C}(\mathcal{X},\beta)=$ $\mathcal{C}(\mathcal{X},\varepsilon)\mathcal{C}(\mathcal{X},g')$ is epic, and so is $\mathcal{C}(\mathcal{X},g)$, as desired.

Finally, if $\mathcal{C}$ has enough $\xi_{\mathcal{X}}$-projectives and $\mathcal{X}=\mathcal{P}(\xi_{\mathcal{X}})$, then it is easy to check that $\mathcal{X}$ is a contravariantly finite in $\mathcal{C}$. Conversely, assume that $\mathcal{X}$ is a contravariantly finite in $\mathcal{C}$. It is clear that $\mathcal{X}\subseteq \mathcal{P}(\xi_{\mathcal{X}})$. For the reverse containment, let $P$ be an object in $\mathcal{P}(\xi_{\mathcal{X}})$. Then there exists a triangle $\xymatrix@C=1.5em{K\ar[r]&X\ar[r]&P\ar[r]&\Sigma K}$ in $\xi_{\mathcal{X}}$ with $X\in{\mathcal{X}}$. Thus this triangle is split, and hence $P\in \mathcal{X}$. This completes the proof.
\end{proof}

The next two results are dual to Lemma \ref{lem1} and Proposition \ref{prop1}, we omit the proof.

\begin{lem}\label{lem2} Let $\mathcal{Y}$ be a $\Sigma$-stable full additive subcategory of $\mathcal{C}$. Then the following are equivalent for any triangle $(T):\xymatrix@C=1.5em{A\ar[r]^{f}&B\ar[r]^{g}&C\ar[r]^{h}&\Sigma A}$.
\begin{enumerate}
\item The triangle $(T)$ is left $\mathcal{C}(-,\mathcal{Y})$-exact.
\item The triangle $(T)$ is right $\mathcal{C}(-,\mathcal{Y})$-exact.
\item The triangle $(T)$ is $\mathcal{C}(-,\mathcal{Y})$-exact.
\end{enumerate}
\end{lem}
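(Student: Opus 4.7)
The plan is to imitate the proof of Lemma \ref{lem1} via duality, since Lemma \ref{lem2} is its mirror image under passing to $\mathcal{C}^{\mathrm{op}}$. The opposite category $\mathcal{C}^{\mathrm{op}}$ carries a canonical triangulated structure with suspension $\Sigma^{-1}$, and the bifunctor $\mathcal{C}(-,Y)$ identifies with the covariant hom $\mathcal{C}^{\mathrm{op}}(Y,-)$. A $\Sigma$-stable subcategory $\mathcal{Y}\subseteq\mathcal{C}$ is automatically $\Sigma^{-1}$-stable, hence stable under the suspension of $\mathcal{C}^{\mathrm{op}}$. Under this dictionary, left $\mathcal{C}(-,\mathcal{Y})$-exactness of $(T)$ corresponds to right $\mathcal{C}^{\mathrm{op}}(\mathcal{Y},-)$-exactness of the opposite triangle, and the ``right'' variant corresponds to the ``left'' variant on the other side, so invoking Lemma \ref{lem1} inside $\mathcal{C}^{\mathrm{op}}$ yields the three-way equivalence at once.

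If one prefers a direct argument, I would apply the contravariant bifunctor $\mathcal{C}(-,Y)$ to the triangle $(T)$ and work with the long exact sequence of abelian groups
\[
\cdots \to \mathcal{C}(\Sigma A, Y) \xrightarrow{\mathcal{C}(h,Y)} \mathcal{C}(C,Y) \xrightarrow{\mathcal{C}(g,Y)} \mathcal{C}(B,Y) \xrightarrow{\mathcal{C}(f,Y)} \mathcal{C}(A,Y) \xrightarrow{\mathcal{C}(\Sigma^{-1}h,Y)} \mathcal{C}(\Sigma^{-1}C,Y) \to \cdots.
\]
Exactness shows at once that $\mathcal{C}(g,Y)$ is monic if and only if $\mathcal{C}(h,Y)=0$, and that $\mathcal{C}(f,Y)$ is epic if and only if $\mathcal{C}(\Sigma^{-1}h,Y)=0$. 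Because the suspension $\Sigma$ is an auto-equivalence, the natural isomorphism $\mathcal{C}(\Sigma^{-1}C,Y)\cong\mathcal{C}(C,\Sigma Y)$ rewrites the latter vanishing as $\mathcal{C}(h,\Sigma Y)=0$.

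The $\Sigma$-stability of $\mathcal{Y}$ is then the single crucial input: since $\Sigma\mathcal{Y}=\mathcal{Y}$, the two families of conditions $\{\mathcal{C}(h,Y)=0 : Y\in\mathcal{Y}\}$ and $\{\mathcal{C}(h,\Sigma Y)=0 : Y\in\mathcal{Y}\}$ agree, which gives $(1)\Leftrightarrow(2)$. Because $(3)$ is by definition the conjunction of $(1)$ and $(2)$, the equivalence $(1)\Leftrightarrow(2)\Leftrightarrow(3)$ follows. The argument is essentially formal; the only point requiring care is matching signs and indices so that the correct maps in the long exact sequence correspond to the ``left'' and ``right'' exactness conditions after applying the contravariant hom, and to verify that the rotation/suspension dictionary carries $\Sigma$-stability across the duality.
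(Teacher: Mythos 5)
Your proposal is correct and matches the paper's approach: the paper omits the proof precisely because Lemma \ref{lem2} is dual to Lemma \ref{lem1}, which is your first paragraph's argument (and your observation that $\Sigma\mathcal{Y}=\mathcal{Y}$ implies $\Sigma^{-1}$-stability in $\mathcal{C}^{\mathrm{op}}$ is the one point that needed checking). Your second, direct argument via the long exact sequence of $\mathcal{C}(-,Y)$ and the identification $\mathcal{C}(\Sigma^{-1}C,Y)\cong\mathcal{C}(C,\Sigma Y)$ is also sound and is essentially the content of the cited proof of Lemma \ref{lem1}.
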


We set $\xi^{\mathcal{Y}}$ be the class of triangles satisfying the condition of Lemma \ref{lem2}.

\begin{prop}\label{prop2} Let $\mathcal{Y}$ be a $\Sigma$-stable full additive subcategory of $\mathcal{C}$ which is closed under direct summands. Then $\xi^{\mathcal{Y}}$ is a proper class of $\mathcal{C}$. Moreover, $\mathcal{Y}$ is a covariantly finite in $\mathcal{C}$ if and only if $\mathcal{C}$ has enough $\xi^{\mathcal{Y}}$-injectives and $\mathcal{Y}=\mathcal{I}(\xi^{\mathcal{Y}})$.
\end{prop}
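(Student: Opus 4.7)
The plan is to mirror the proof of Proposition \ref{prop1}, with the contravariant functor $\mathcal{C}(-,\mathcal{Y})$ playing the role that $\mathcal{C}(\mathcal{X},-)$ did before, and with the roles of base change / cobase change swapped in the crucial naturality argument. By Lemma \ref{lem2}, a triangle lies in $\xi^{\mathcal{Y}}$ iff $\mathcal{C}(-,Y)$ sends it to a short exact sequence for every $Y\in\mathcal{Y}$, so closure under isomorphisms, finite coproducts, split triangles, and suspensions is immediate from elementary properties of $\mathcal{C}(-,Y)$ together with the $\Sigma$-stability of $\mathcal{Y}$.

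For the proper-class axioms, I would work with the same $(*)$-shaped octahedral diagram as in Proposition \ref{prop1} but apply $\mathcal{C}(-,Y)$ for an arbitrary $Y\in\mathcal{Y}$. To show $\xi^{\mathcal{Y}}$ is closed under cobase change, assume the middle horizontal triangle is in $\xi^{\mathcal{Y}}$; then $\mathcal{C}(f,Y)$ is epic, and since $\mathcal{C}(f,Y)=\mathcal{C}(f',Y)\mathcal{C}(\beta,Y)$ (after the appropriate identification), $\mathcal{C}(f',Y)$ is epic, so the top horizontal triangle lies in $\xi^{\mathcal{Y}}$. Closure under base change is the dual statement via the vertical triangles. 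For saturation, assume the third vertical and second horizontal triangles are in $\xi^{\mathcal{Y}}$, so $\mathcal{C}(\lambda,Y)$ and $\mathcal{C}(f',Y)$ are monic; then $\mathcal{C}(\beta,Y)\mathcal{C}(f,Y)=\mathcal{C}(f',Y)\mathcal{C}(\alpha,Y)$ being a composition of monics forces $\mathcal{C}(f,Y)$ to be monic, placing the middle horizontal triangle in $\xi^{\mathcal{Y}}$.

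For the second assertion, the easy direction is that if $\mathcal{C}$ has enough $\xi^{\mathcal{Y}}$-injectives and $\mathcal{Y}=\mathcal{I}(\xi^{\mathcal{Y}})$, then for any $A\in\mathcal{C}$ the first map of a $\xi^{\mathcal{Y}}$-triangle $A\to Y\to K\to \Sigma A$ with $Y\in\mathcal{Y}$ is immediately a $\mathcal{Y}$-preenvelope by the defining exactness of $\mathcal{C}(-,Y')$ for $Y'\in\mathcal{Y}$. Conversely, if $\mathcal{Y}$ is covariantly finite, embed any $A$ into a $\mathcal{Y}$-preenvelope $f:A\to Y$, complete to a triangle $A\to Y\to K\to \Sigma A$, and observe that the preenvelope property is exactly the $\mathcal{C}(-,\mathcal{Y})$-exactness condition, so this triangle lies in $\xi^{\mathcal{Y}}$ with $Y\in\mathcal{Y}\subseteq\mathcal{I}(\xi^{\mathcal{Y}})$; hence $\mathcal{C}$ has enough $\xi^{\mathcal{Y}}$-injectives. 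The inclusion $\mathcal{Y}\subseteq\mathcal{I}(\xi^{\mathcal{Y}})$ is tautological, and for the reverse, given $I\in\mathcal{I}(\xi^{\mathcal{Y}})$, the triangle $I\to Y\to K\to \Sigma I$ just constructed lies in $\xi^{\mathcal{Y}}$, hence splits because $I$ is $\xi^{\mathcal{Y}}$-injective, so $I$ is a direct summand of $Y\in\mathcal{Y}$ and lies in $\mathcal{Y}$ by the closure hypothesis.

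The only place requiring genuine care is the saturation and base/cobase arguments, since one must keep straight which horizontal maps become monic versus epic under the contravariant functor; but once the bookkeeping is done the proof is formally dual to Proposition \ref{prop1}, which is why the authors elide it.
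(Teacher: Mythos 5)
Your overall plan is the right one --- the paper itself omits this proof as formally dual to Proposition \ref{prop1}, and your second half (the equivalence with covariant finiteness: completing a $\mathcal{Y}$-preenvelope $f\colon A\to Y$ to a triangle which lies in $\xi^{\mathcal{Y}}$ by Lemma \ref{lem2}, and splitting the triangle $I\to Y\to K\to\Sigma I$ for $I\in\mathcal{I}(\xi^{\mathcal{Y}})$ to get $I\in\mathcal{Y}$ by closure under summands) is correct and is exactly the intended dual argument. But your saturation step, the very place you flag as needing ``genuine care,'' fails as written. If the third vertical triangle $M\xrightarrow{\lambda}E\xrightarrow{\varepsilon}C\to\Sigma M$ and the second horizontal triangle $A\xrightarrow{f'}G\xrightarrow{g'}E\to\Sigma A$ of $(\ast)$ lie in $\xi^{\mathcal{Y}}$, then for $Y\in\mathcal{Y}$ the induced sequences run $0\to\mathcal{C}(C,Y)\to\mathcal{C}(E,Y)\to\mathcal{C}(M,Y)\to 0$ etc., so $\mathcal{C}(\lambda,Y)$ and $\mathcal{C}(f',Y)$ are \emph{epimorphisms}, not monomorphisms as you assert; the monic maps are $\mathcal{C}(\varepsilon,Y)$ and $\mathcal{C}(g',Y)$. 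Your displayed identity $\mathcal{C}(\beta,Y)\mathcal{C}(f,Y)=\mathcal{C}(f',Y)\mathcal{C}(\alpha,Y)$ does not even typecheck ($\mathcal{C}(f,Y)$ lands in $\mathcal{C}(A,Y)$ while $\mathcal{C}(\beta,Y)$ starts at $\mathcal{C}(B,Y)$), and no commuting square of $(\ast)$ relates $f,\beta,f',\alpha$ in that pattern. Moreover, your target ``$\mathcal{C}(f,Y)$ monic'' is not the membership criterion for $\xi^{\mathcal{Y}}$: by Lemma \ref{lem2} you need $\mathcal{C}(g,Y)$ monic (equivalently $\mathcal{C}(f,Y)$ epic). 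The correct dual of the paper's cancellation uses the square $g\beta=\varepsilon g'$: then $\mathcal{C}(\beta,Y)\mathcal{C}(g,Y)=\mathcal{C}(g',Y)\mathcal{C}(\varepsilon,Y)$ is a composite of monics, hence $\mathcal{C}(g,Y)$ is monic and the third row lies in $\xi^{\mathcal{Y}}$.

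Your claimed ``swap'' of base and cobase change is also mistaken, though this part is repairable bookkeeping rather than a broken argument. Passing from the third row to the second row of $(\ast)$ is base change no matter which variance of $\mathcal{C}(-,-)$ you apply, and your cancellation $\mathcal{C}(f,Y)=\mathcal{C}(f',Y)\mathcal{C}(\beta,Y)$ epic $\Rightarrow\mathcal{C}(f',Y)$ epic proves closure of $\xi^{\mathcal{Y}}$ under \emph{base} change --- the same direction as in Proposition \ref{prop1}, just with epic-cancellation replacing monic-cancellation. Closure under cobase change instead requires the dual diagram, pushing out along $\alpha\colon A\to A'$ to get $A'\xrightarrow{f'}B'\xrightarrow{g'}C\to\Sigma A'$ with $\mu\colon B\to B'$ and $g=g'\mu$; then $\mathcal{C}(g,Y)=\mathcal{C}(\mu,Y)\mathcal{C}(g',Y)$ monic forces $\mathcal{C}(g',Y)$ monic. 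So the duality shows up in which factorization and which cancellation property (first factor of an epic is epic versus second factor of a monic is monic) you invoke, not in which closure axiom the diagram $(\ast)$ addresses; as it stands your write-up gets the monic/epic assignments, the commuting squares, and the membership criterion wrong in precisely the steps that distinguish $\xi^{\mathcal{Y}}$ from $\xi_{\mathcal{X}}$.
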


\begin{definition} {\rm Let $M$ be an object in $\mathcal{C}$, and let $\mathcal{X}$ and $\mathcal{Y}$ be $\Sigma$-stable full additive subcategories of $\mathcal{C}$.
\begin{enumerate}

\item An {\it $\mathcal{X}$-resolution} of $M$ is a diagram $X^{\bullet}\rightarrow M$ such that $X^{\bullet}:=\cdots \rightarrow X_{1}\rightarrow X_{0}\rightarrow0$ is a complex with $X_i\in{\mathcal{X}}$ for all $i\geq 0$, and $\cdots \rightarrow X_{1}\rightarrow X_{0}\rightarrow M$ is a $\xi_{\mathcal{X}}$-exact complex. Moreover, the $\mathcal{X}$-resolution $X^{\bullet}\rightarrow M$ of $M$ is called \emph{$\mathcal{C}(-,\mathcal{Y})$-exact} if its $\xi_{\mathcal{X}}$-exact complex is $\mathcal{C}(-,\mathcal{Y})$-exact.

 \item   A {\it $\mathcal{Y}$-coresolution} of $M$ is a diagram $M\rightarrow Y^{\bullet}$ such that $Y^{\bullet}:=0\rightarrow Y_{0}\rightarrow Y_{-1}\rightarrow \cdots$ is a complex with $Y_i\in{\mathcal{Y}}$ for all $i\leq 0$, and $M\rightarrow Y_{0}\rightarrow Y_{-1}\rightarrow \cdots$ is a $\xi^{\mathcal{Y}}$-exact complex. Moreover, the $\mathcal{Y}$-coresolution $M\rightarrow Y^{\bullet}$ of $M$ is called \emph{$\mathcal{C}(\mathcal{X},-)$-exact} if its $\xi^{\mathcal{Y}}$-exact complex is $\mathcal{C}(\mathcal{X},-)$-exact.
\end{enumerate}}
\end{definition}

The next result characterizes when $\mathcal{C}$ has enough $\xi_{\mathcal{X}}$-projectives and $\xi^{\mathcal{Y}}$-injectives.

\begin{prop}\label{thm1} Assume that $\mathcal{X}$ and $\mathcal{Y}$ are $\Sigma$-stable full additive subcategories of $\mathcal{C}$ which are closed under direct summands. Then the following are equivalent.
\begin{enumerate}
\item $\xi_{\mathcal{X}}$=$\xi^{\mathcal{Y}}$, $\mathcal{X}=\mathcal{P}(\xi_{\mathcal{X}})$, $\mathcal{Y}=\mathcal{I}(\xi^{\mathcal{Y}})$ and every object in $\mathcal{C}$ has enough $\xi_{\mathcal{X}}$-projectives and enough $\xi^{\mathcal{Y}}$-injectives.

\item The pair {\rm($\mathcal{X},\mathcal{Y}$)} satisfies:
\begin{enumerate}
\item $\mathcal{X}$ is contravariantly finite and $\mathcal{Y}$ is covariantly finite in $\mathcal{C}$.

\item For any object $M\in{\mathcal{C}}$, there is an $\mathcal{X}$-resolution $X^{\bullet}\rightarrow M$ such that it is $\mathcal{C}(-,\mathcal{Y})$-exact.
 \item For any object $N\in{\mathcal{C}}$, there is a $\mathcal{Y}$-coresolution $N\rightarrow Y^{\bullet}$ such that it is  $\mathcal{C}(\mathcal{X},-)$-exact.
\end{enumerate}
\end{enumerate}
\end{prop}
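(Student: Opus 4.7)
The plan is to prove the two implications separately. For (1)$\Rightarrow$(2), the hypotheses $\mathcal{X}=\mathcal{P}(\xi_{\mathcal{X}})$ together with the existence of enough $\xi_{\mathcal{X}}$-projectives give, via Proposition~\ref{prop1}, that $\mathcal{X}$ is contravariantly finite; dually, Proposition~\ref{prop2} yields covariant finiteness of $\mathcal{Y}$, establishing (a). For (b), iterating the $\xi_{\mathcal{X}}$-projective precover construction produces for every $M\in\mathcal{C}$ a $\xi_{\mathcal{X}}$-projective resolution $X^{\bullet}\to M$ assembled from triangles $K_{i+1}\to X_{i}\to K_{i}\to \Sigma K_{i+1}$ in $\xi_{\mathcal{X}}$; the hypothesis $\xi_{\mathcal{X}}=\xi^{\mathcal{Y}}$ forces each such triangle to be $\mathcal{C}(-,\mathcal{Y})$-exact, hence so is the whole resolution. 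Condition (c) follows dually.

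For (2)$\Rightarrow$(1), condition (a) together with Propositions~\ref{prop1} and~\ref{prop2} already yields enough $\xi_{\mathcal{X}}$-projectives with $\mathcal{X}=\mathcal{P}(\xi_{\mathcal{X}})$, and dually enough $\xi^{\mathcal{Y}}$-injectives with $\mathcal{Y}=\mathcal{I}(\xi^{\mathcal{Y}})$. The remaining task is to prove $\xi_{\mathcal{X}}=\xi^{\mathcal{Y}}$. For $\xi_{\mathcal{X}}\subseteq \xi^{\mathcal{Y}}$, fix $Y\in\mathcal{Y}$. By (b), every $M\in\mathcal{C}$ admits an $\mathcal{X}$-resolution $X^{\bullet}\to M$ whose underlying $\xi_{\mathcal{X}}$-exact complex is $\mathcal{C}(-,\mathcal{Y})$-exact. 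Since $\mathcal{X}=\mathcal{P}(\xi_{\mathcal{X}})$, this is a $\xi_{\mathcal{X}}$-projective resolution, and applying $\mathcal{C}(-,Y)$ yields an exact complex, so that $\xi{\rm xt}^{n}_{\mathcal{P}}(M,Y)=0$ for every $n\geq 1$. Now let $(T):A\to B\to C\to \Sigma A$ be any triangle in $\xi_{\mathcal{X}}$. The long exact sequence of relative Ext attached to $(T)$ (a standard consequence of the enough-projectives hypothesis, developed in \cite{Bel1}) produces
\[
0\to \mathcal{C}(C,Y)\to \mathcal{C}(B,Y)\to \mathcal{C}(A,Y)\to \xi{\rm xt}^{1}_{\mathcal{P}}(C,Y)=0,
\]
so $(T)$ is right $\mathcal{C}(-,\mathcal{Y})$-exact, and Lemma~\ref{lem2} upgrades this to $\mathcal{C}(-,\mathcal{Y})$-exactness. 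Hence $(T)\in\xi^{\mathcal{Y}}$. The reverse inclusion $\xi^{\mathcal{Y}}\subseteq\xi_{\mathcal{X}}$ is entirely dual, using (c), the $\xi^{\mathcal{Y}}$-injective coresolutions, the vanishing of relative $\xi{\rm xt}^{1}$ computed on the $\mathcal{Y}$-side, and Lemma~\ref{lem1}.

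The step that I expect to demand the most care is the invocation of the long exact sequence of relative Ext, since that is what converts the pointwise vanishing $\xi{\rm xt}^{1}(M,Y)=0$ into the sharp exactness of $\mathcal{C}(-,Y)$ on every triangle of $\xi_{\mathcal{X}}$; fortunately this long exact sequence is already part of Beligiannis's framework as soon as enough $\xi_{\mathcal{X}}$-projectives are available, which we have secured via Proposition~\ref{prop1}. The only bookkeeping subtlety is to verify that the ``$\mathcal{C}(-,\mathcal{Y})$-exactness'' of the augmented complex $\cdots\to X_{1}\to X_{0}\to M$, defined triangle-by-triangle, indeed forces $H^{n}\bigl(\mathcal{C}(X^{\bullet},Y)\bigr)=0$ for all $n\geq 1$; this unwinds by induction on the constituent triangles and does not introduce any genuinely new ideas.
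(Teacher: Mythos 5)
Your proposal is correct in substance, and for the direction $(1)\Rightarrow(2)$ it matches the paper (which simply cites Propositions \ref{prop1} and \ref{prop2}; your explicit iteration of precovers is what that citation unwinds to). For the key step of $(2)\Rightarrow(1)$, the inclusion $\xi_{\mathcal{X}}\subseteq\xi^{\mathcal{Y}}$, you take a genuinely different route. The paper avoids relative derived functors entirely: given a triangle $(T)\colon A\xrightarrow{f}B\xrightarrow{g}C\xrightarrow{h}\Sigma A$ in $\xi_{\mathcal{X}}$, it uses only the \emph{first} triangle $K_1\to X_0\xrightarrow{f_0}C$ of the $\mathcal{C}(-,\mathcal{Y})$-exact $\mathcal{X}$-resolution of $C$ from (b); since $X_0\in\mathcal{X}$ and $(T)\in\xi_{\mathcal{X}}$, the map $f_0$ lifts through $g$, say $f_0=g\beta$, and then monicity of $\mathcal{C}(f_0,Y)=\mathcal{C}(\beta,Y)\circ\mathcal{C}(g,Y)$ forces monicity of $\mathcal{C}(g,Y)$, whence Lemma \ref{lem2} finishes. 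Your argument instead establishes the vanishing $\xi{\rm xt}^{n}_{\mathcal{P}}(M,Y)=0$ for all $n\geq 1$, $M\in\mathcal{C}$, $Y\in\mathcal{Y}$, and feeds it into the relative long exact sequence. This is heavier (it imports the comparison theorem and the horseshoe construction from \cite{Bel1}), but it buys a stronger intermediate statement, namely the balance of the relative cohomology on $\mathcal{C}\times\mathcal{Y}$, whereas the paper's lifting argument is a short self-contained diagram chase.

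One point needs more care than you give it. In the triangulated setting the degree-zero terms of Beligiannis's long exact sequence are $\xi{\rm xt}^{0}_{\mathcal{P}}$, not $\mathcal{C}(-,-)$: unlike the abelian case, the natural map $\mathcal{C}(A,Y)\to\xi{\rm xt}^{0}_{\mathcal{P}}(A,Y)$ need not be bijective (its kernel contains the maps factoring through the connecting morphism of a triangle in $\xi_{\mathcal{X}}$, a phantom-type ideal). So the display $0\to\mathcal{C}(C,Y)\to\mathcal{C}(B,Y)\to\mathcal{C}(A,Y)\to\xi{\rm xt}^{1}_{\mathcal{P}}(C,Y)$ is \emph{not} literally available from \cite{Bel1} for arbitrary coefficients, and writing it as ``a standard consequence of the enough-projectives hypothesis'' overstates what the framework provides. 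Fortunately the gap closes with the material you already have: the same splicing of the short exact sequences $0\to\mathcal{C}(K_n,Y)\to\mathcal{C}(X_n,Y)\to\mathcal{C}(K_{n+1},Y)\to 0$ that gives you $H^{n}\bigl(\mathcal{C}(X^{\bullet},Y)\bigr)=0$ for $n\geq 1$ also shows that the augmented complex $0\to\mathcal{C}(M,Y)\to\mathcal{C}(X_0,Y)\to\mathcal{C}(X_1,Y)\to\cdots$ is exact, hence identifies $\xi{\rm xt}^{0}_{\mathcal{P}}(M,Y)\cong\mathcal{C}(M,Y)$ naturally whenever $Y\in\mathcal{Y}$. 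Applying this identification at $A$, $B$ and $C$ makes your exact sequence legitimate, so that $\mathcal{C}(f,Y)$ is epic and Lemma \ref{lem2} (together with the dual argument via (c) and Lemma \ref{lem1}) completes the proof. With that one-line supplement recorded, your proof is sound.
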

\begin{proof} $(1)\Rightarrow(2)$ follows from Propositions \ref{prop1} and \ref{prop2}.

$(2)\Rightarrow(1)$. By Lemmas \ref{prop1} and \ref{prop2}, it suffices to show $\xi_{\mathcal{X}}=\xi^{\mathcal{Y}}$. Let  $\xymatrix@C=1.5em{A\ar[r]^{f}&B\ar[r]^{g}&C\ar[r]^{h}&\Sigma A}$ be a triangle in $\xi_{\mathcal{X}}$. By hypothesis, there is an $\mathcal{X}$-resolution $X^{\bullet}\rightarrow C$ of $C$ such that it is $\mathcal{C}(-,\mathcal{Y})$-exact. Then there exists a $\xi_{\mathcal{X}}$-exact complex $$\xymatrix{\cdots\ar[r]&X_1\ar[r]^{d_1}&X_0\ar[r]^{d_0}&C}$$ in $\mathcal{C}$ which is $\mathcal{C}(-,\mathcal{Y})$-exact. This gives us a
triangle $\xymatrix{K_{1}\ar[r]^{g_0}&X_0\ar[r]^{f_0}&C\ar[r]^{h_0\ \ \ }&\Sigma K_{1}}$ which is $\mathcal{C}(-,\mathcal{Y})$-exact, and hence we have the following commutative diagram of triangles

$$\xymatrix{K_{1}\ar[r]^{g_{0}}\ar@{.>}[d]^{\alpha}&X_{0}\ar[r]^{f_{0}}\ar@{.>}[d]^{\beta}&C\ar[r]^{h_{0}}\ar@{=}[d]&\Sigma K_{1}\ar@{.>}[d]\\
A\ar[r]^{f}&B\ar[r]^{g}&C\ar[r]^{h}&\Sigma A.}$$
Let $Y$ be an object in $\mathcal{Y}$. Applying $\mathcal{C}(-,Y)$ to the commutative diagram above, we have the following commutative diagram

$$\xymatrix{&\mathcal{C}(C,Y)\ar[r]^{\mathcal{C}(g,Y)}\ar@{=}[d]&\mathcal{C}(B,Y)\ar[r]^{\mathcal{C}(f,Y)}\ar[d]^{\mathcal{C}(\beta,Y)}
&\mathcal{C}(A,Y)\ar[d]^{\mathcal{C}(\alpha,Y)}&\\
0\ar[r]&\mathcal{C}(C,Y)\ar[r]^{\mathcal{C}(f_0,Y)}&\mathcal{C}(X_{0},Y)\ar[r]^{\mathcal{C}(g_0,Y)}&\mathcal{C}(K_1,Y)\ar[r]&0.}$$
Note that $\mathcal{C}(f_{0},Y):\mathcal{C}(C,Y)\rightarrow\mathcal{C}(X_{0},Y)$ is monic. It follows that $\mathcal{C}(g,Y):\mathcal{C}(C,Y)\rightarrow\mathcal{C}(B,Y)$ is monic. So $\xymatrix@C=1.5em{A\ar[r]^{f}&B\ar[r]^{g}&C\ar[r]^{h}&\Sigma A}$ is $\mathcal{C}(-,\mathcal{Y})$-exact by Lemma \ref{lem2} and it belongs to $\xi^{\mathcal{Y}}$.
This implies that $\xi_{\mathcal{X}}\subseteq\xi^{\mathcal{Y}}$. Dually, one can show that $\xi^{\mathcal{Y}}\subseteq\xi_{\mathcal{X}}$.
\end{proof}

Now we introduce the notion of balanced pairs, which parallels \cite[Definition 1.1]{chen}.

\begin{definition}\label{df:3.7} {\rm Assume that $\mathcal{X}$ and $\mathcal{Y}$ are $\Sigma$-stable full additive subcategories of $\mathcal{C}$ which are closed under direct summands. The pair $(\mathcal{X},\mathcal{Y})$ is called a \emph{balanced pair} if it satisfies the equivalent conditions of Proposition \ref{thm1}.}
\end{definition}

The following example comes from Krause \cite{Krause} and Beligiannis \cite{Bel1}.

\begin{Ex}\label{Ex:3.8} Assume that $\mathcal{C}$ is a compactly generated triangulated category. Then the class $\xi$ of pure triangles
(which is induced by the compact objects) is proper and $\mathcal{C}$ has enough $\xi$-projectives and $\xi$-injectives. Denote by $\mathcal{PP}$ the class of pure projective objects and by $\mathcal{PI}$ the class of pure injective objects in $\mathcal{C}$. It follows that $(\mathcal{PP},\mathcal{PI})$ is a balanced pair in $\mathcal{C}$.
\end{Ex}

Recall that $\mathcal{C}$ is called a \emph{$\xi$-Gorenstein triangulated category} \cite[Definition 4.6]{AS2}, if any object of $\mathcal{C}$ has both $\xi$-$\mathcal{G}$projective and $\xi$-$\mathcal{G}$injective dimension less than or equal to a nonnegative integer $n$.

\begin{Ex}\label{Ex:3.9} Assume that $\mathcal{C}$ is a $\xi$-Gorenstein triangulated category. By \cite[Remark 4.4]{AS1} and its dual, one has that the class $\mathcal{GP}(\xi)$ of $\xi$-$\mathcal{G}$projective objects is contravariantly finite in $\mathcal{C}$ and the class $\mathcal{GI}(\xi)$ of $\xi$-$\mathcal{G}$injective objects is convariantly finite in $\mathcal{C}$. So $(\mathcal{GP}(\xi),\mathcal{GI}(\xi))$ is a balanced pair by \cite[Theorem4.3]{RL1}.
\end{Ex}

Let $R$ be an associative ring with identity. We denote the category of left $R$-modules by $\textrm{Mod}R$. Assume that $(\mathscr{X},\mathscr{Y})$ is a cotorsion pair in $\textrm{Mod}R$, that is $\mathscr{Y}=\{M:\mbox{Ext}^1_R(X,M)=0\mbox{ for every } X\in \mathscr{X}\}$ and $\mathscr{X}=\{N:\mbox{Ext}^1_R(N,Y)=0\mbox{ for every } Y\in\mathscr{Y}\}$. We introduce the following classes, which comes from \cite{G}:

\begin{enumerate}
\item $\widetilde{\mathscr{X}}$ is the class of all exact complexes $X^\bullet$ of left $R$-modules with each cycle $\textrm{Z}_{n}(X^\bullet)\in{\mathscr{X}}$. Similarly, one has the notion of $\widetilde{\mathscr{Y}}$.

\item $dg\mathscr{X}$ is the class of all complexes $X^\bullet$ of left $R$-modules satisfying that $X^\bullet_n \in {\mathscr{X}}$ for any $n\in{\mathbb{Z}}$
and every chain map $f^\bullet : X^\bullet \rightarrow Y^\bullet$ is null homotopic whenever $Y^\bullet\in{\widetilde{\mathscr{Y}}}$ . Similarly, $dg\mathscr{Y}$ can be also defined.
\end{enumerate}

\begin{Ex}\label{Ex:3.10} Assume that ${\rm K}(R)$ is the  homotopy
category of left $R$-modules. Denote by $\mathscr{P}$ the class of projective left $R$-modules and by $\mathscr{I}$ the class of injective left $R$-modules. Then $(dg\mathscr{P}, dg\mathscr{I})$ is a balanced pair in $\textrm{K}(R)$.
\end{Ex}
\begin{proof} It is easy to check that $dg\mathscr{P}$ and $dg\mathscr{I}$ are $\Sigma$-stable full additive subcategories of ${\rm K}(R)$. It follows from \cite[Lemmas 4.2 and 4.5]{Chenwj} that $dg\mathscr{P}$ is contravariantly finite in $\textrm{K}(R)$ and $dg\mathscr{I}$ is covariantly finite in $\textrm{K}(R)$. Note that every object $M$ in ${\rm K}(R)$ has a triangle $$\xymatrix@C=1.5em{K\ar[r]^{f}&X\ar[r]^{g}&M\ar[r]^{h}&\Sigma K}$$ with $X\in{dg\mathscr{P}}$ and $K$ an exact complex by  \cite[Lemma 4.5]{Chenwj}. Then one can construct a $dg\mathscr{P}$-resolution $X^{\bullet}\rightarrow M$. To prove that it is $\textrm{K}(R)(-,dg\mathscr{I})$-exact, it suffices to show that any triangle $(T):\xymatrix@C=1.5em{K\ar[r]^{f}&X\ar[r]^{g}&M\ar[r]^{h}&\Sigma K}$ with $X\in{dg\mathscr{P}}$ and $K$ an exact complex is $\textrm{K}(R)(-,dg\mathscr{I})$-exact. Let $Y$ be an object in $dg\mathscr{I}$. Then we have the following exact sequence
$$\xymatrix@C=1.5em{\textrm{K}(R)(M,Y)\ar[r]&\textrm{K}(R)(X,Y)\ar[r]&\textrm{K}(R)(K,Y).}$$
Since $\textrm{K}(R)(K,Y)=0$, it follows that the triangle $(T)$ is $\textrm{K}(R)(-,dg\mathscr{I})$-exact by Lemma \ref{lem2}, as desired. Similarly, one can  construct a $dg\mathscr{I}$-coresolution $M\rightarrow Y^{\bullet}$ which is $\textrm{K}(R)(dg\mathscr{P},-)$-exact. This completes the proof.
\end{proof}

We are now in a position to prove the main results of this paper.
\begin{para}{\bf Proof of Theorem \ref{thm:3.11}.} Let $(\mathcal{X},\mathcal{Y})$ be a balanced pair in $\mathcal{C}$. Then $\xi_{\mathcal{X}}=\xi^{\mathcal{Y}}$ is the desired proper class such that $\mathcal{X}=\mathcal{P}(\xi_{\mathcal{X}})$ and $\mathcal{Y}=\mathcal{I}(\xi^{\mathcal{Y}})$ by Proposition \ref{thm1}. Conversely, assume that $\xi$ is a proper class in $\mathcal{C}$ with enough $\xi$-projectives and enough $\xi$-injectives. We put $(\mathcal{X},\mathcal{Y})=(\mathcal{P}(\xi),\mathcal{I}(\xi))$. Note that $\Sigma (\mathcal{P}(\xi))=\mathcal{P}(\xi)$ and $\Sigma (\mathcal{I}(\xi))=\mathcal{I}(\xi)$. Then $(\mathcal{X},\mathcal{Y})=(\mathcal{P}(\xi),\mathcal{I}(\xi))$ is a balanced pair by Proposition \ref{thm1}.

For any balanced pair $(\mathcal{X},\mathcal{Y})$, one can check that $\Phi\Psi(\mathcal{X},\mathcal{Y})=\Phi(\xi_{\mathcal{X}}=\xi^{\mathcal{Y}})=
(\mathcal{P}(\xi_{\mathcal{X}}),\mathcal{I}(\xi^{\mathcal{Y}}))=(\mathcal{X},\mathcal{Y})$. On the other hand, assume that $\xi$ is a proper class  in $\mathcal{C}$ with enough $\xi$-projectives and enough $\xi$-injectives, it is straightforward to see that $\Psi\Phi(\xi)=\Psi((\mathcal{P}(\xi),\mathcal{I}(\xi)))=
\Psi(\mathcal{P}(\xi_{\mathcal{X}}),\mathcal{I}(\xi^{\mathcal{Y}}))=\xi$. This completes the proof.\hfill$\Box$
\end{para}

\begin{para}{\bf Proof of Theorem \ref{cor:3.13}.} By Theorem \ref{thm:3.11}, we have that $\xi$ is a proper class in $\mathcal{C}$ with enough $\xi$-projectives and enough $\xi$-injectives. If follows from \cite[Theorem 3.2]{HZZ} that $(\mathcal{C}, \mathbb{E}_\xi, \mathfrak{s}_\xi)$ is an extriangulated category. Moreover,

(1) If $\mathcal{X}=\mathcal{Y}=0$, then $\xi$ is the class of all triangles by Theorem \ref{thm:3.11}. Hence $(\mathcal{C}, \mathbb{E}_\xi, \mathfrak{s}_\xi)$ is a triangulated category. Conversely, we assume that $(\mathcal{C}, \mathbb{E}_\xi, \mathfrak{s}_\xi)$ is a triangulated category. Then
for any object $X$ in $\mathcal{X}$ and any morphism $f:C\to X$, one has a triangle $$
(T1):\xymatrix@C=2em{ K\ar[r]&C\ar[r]&X\ar[r]&\Sigma K}$$
in $\xi$. Thus the triangle $(T1)$ is split, and hence $X=0$ and $\mathcal{X}=0$. Similarly, one can prove that $\mathcal{Y}=0$.

(2) If $\mathcal{X}=\mathcal{Y}=\mathcal{C}$, then $\xi$ is the class of split triangles by Theorem \ref{thm:3.11}. So $(\mathcal{C}, \mathbb{E}_\xi, \mathfrak{s}_\xi)$ is an exact category by \cite[Corollary 3.18]{NP}. Conversely, we assume that $(\mathcal{C}, \mathbb{E}_\xi, \mathfrak{s}_\xi)$ is an exact category. Then any triangle $(T2):\xymatrix@C=2em{ A\ar[r]^f&B\ar[r]^g&C\ar[r]^{h\ \ \ \ }&\Sigma A}$ in $\xi$ satisfies that $f$ is monomorphic. Thus the triangle $(T2)$ is split, and hence $\xi$ is the class of split triangles. So $\mathcal{X}=\mathcal{Y}=\mathcal{C}$. This completes the proof.\hfill$\Box$
\end{para}

\begin{rem}\label{rem:3.14} By Theorem \ref{cor:3.13}, one can get that  the balanced pairs constructed in Examples \ref{Ex:3.8}-\ref{Ex:3.10} can induce extriangulated categories which are neither exact nor triangulated
\end{rem}

\end{document}